\documentclass[a4paper]{amsart}
\usepackage[utf8]{inputenc}  
\usepackage{currfile}
\usepackage{graphicx}
\usepackage{upref}
\usepackage{url}
\usepackage[pdftex, colorlinks=true]{hyperref}
\usepackage{color}
\definecolor{darkred}{rgb}{0.6,0,0}

\newcommand{\abs}[1]{\left\vert#1\right\vert}
\newcommand{\R}{\mathbb R}

\newcommand{\norm}[1]{\left\Vert#1\right\Vert}
\newcommand{\seq}[1]{\left\{#1\right\}}
\newcommand{\Dp}{D_+}
\newcommand{\test}{\varphi}

\DeclareMathOperator*{\sgn}{sign}
\newcommand{\sign}[1]{\sgn\left(#1\right)}
\newcommand{\dott}{\, \cdot\,}
\newcommand{\cha}[1]{\chi_{#1}}
\newcommand{\marginlabel}[1]%
       {\mbox{}\marginpar{\raggedleft\hspace{0pt}\tiny{\textcolor{red}{#1}}}}

\newcommand{\arxiv}[1]{\href{http://arxiv.org/pdf/#1}{arXiv:#1}} 

\newtheorem{theorem}{Theorem}[section]

\newtheorem{lemma}[theorem]{Lemma}

\newtheorem{remark*}[theorem]{Remark}

\numberwithin{equation}{section}     
\allowdisplaybreaks

\begin{document}
\title[The limit of Follow-the-Leader
to LWR]{The continuum limit of Follow-the-Leader
models --- a short proof}

\author[Holden]{Helge Holden}
\address[Holden]{\newline
    Department of Mathematical Sciences,
    NTNU Norwegian University of Science and Technology,
    NO--7491 Trondheim, Norway}
\email[]{\href{helge.holden@ntnu.no}{helge.holden@ntnu.no}} 
\urladdr{\href{https://www.ntnu.edu/employees/holden}{https://www.ntnu.edu/employees/holden}}

\author[Risebro]{Nils Henrik Risebro}
\address[Risebro]{\newline
 Department of Mathematics,
University of Oslo,
  P.O.\ Box 1053, Blindern,
  NO--0316 Oslo, Norway }
\email[]{\href{nilshr@math.uio.no}{nilshr@math.uio.no}} 

\date{\today} 

\subjclass[2010]{Primary: 35L02; Secondary:  35Q35, 82B21}

\keywords{Follow-the-Leader model, Lighthill--Whitham--Richards model, traffic flow, continuum limit.}

\thanks{Research was supported by the grant {\it Waves and Nonlinear Phenomena (WaNP)} from the Research Council of Norway. The research was done while the authors were at Institut Mittag-Leffler, Stockholm.}

\dedicatory{We dedicate this paper to the memory of Hans Petter Langtangen (1962--2016)}

\begin{abstract} 
We offer a simple and self-contained proof that the  Follow-the-Leader model converges to the Lighthill--Whitham--Richards model for traffic flow.
\end{abstract}

\maketitle
\section{Introduction} \label{sec:intro}
The problem of convergence of particle models to continuum models is fundamental. We here study it in the context of traffic flow. In this case there are two fundamentally different models: The first one is based on  individual vehicles whose dynamics is determined by the 
behavior of the vehicle immediately in front of it. This gives the Follow-the-Leader (FtL) model, which constitutes a system of ordinary differential equations describing the dynamics of individual vehicles. The other model is based on the assumption of heavy traffic where the individual vehicles are represented by a density. Assuming that the number of vehicles is conserved, we get the classical Lighthill--Whitham--Richards (LWR) model \cite{LW_II, richards}, which is nothing but a scalar hyperbolic conservation law.  The question that we address in this paper is in what sense the FtL model approaches or approximates the LWR model in the case of dense traffic. 

The principal assumption in FtL models is that the velocity $V$ of any
given vehicle is a function of the distance to the vehicle in front of it. We
shall write this function as
\begin{equation*}
  V\left(\frac{\Delta Z}{\ell}\right),
\end{equation*}
where $\Delta Z$ denotes the distance to nearest vehicle in front, and $\ell$ the length of each
vehicle. For obvious reasons, $\Delta Z\ge \ell$. 
It is commonly assumed that $V$ is an increasing positive function
defined in $[1,\infty)$, such that $\lim_{y\to\infty} V(y)=v_{\max}<\infty$.
Consider $N$ vehicles with length $\ell$ and position
$Z_1(t)<\dots<Z_N(t)$ on the real axis with dynamics given by
\begin{equation}\label{eq:Zeq}
  \frac{d}{dt}Z_i=V\Bigl(\frac{Z_{i+1}-Z_i}{\ell}\Bigr) \ \text{ for $i=1,\dots, N-1$.}
\end{equation}
To close this system, we must prescribe the velocity of the first vehicle
at $Z_N$. It is natural to model this by letting $\dot{Z}_N=v_{\max}$.

In this paper we analyze the limit of this system of ordinary
differential equations when $N\to\infty$ and $\ell\to0$. 
We show that  
\begin{equation*}
  \frac{\ell}{Z_{i+1}(t)-Z_i(t)} \to \rho(t,z), 
\end{equation*}
where intuitively  $Z_{i+1},Z_i\to z$, and where $\rho$ is an entropy solution to the scalar conservation law
\begin{equation}\label{eq:scalcons}
  \rho_t + f(\rho)_z = 0, \quad f(\rho)=\rho V\Bigl(\frac{1}{\rho}\Bigr).
\end{equation}
This problem has also been addressed by several other researchers. We here mention \cite{Argall_etal,AwKlarMaterneRascle,CristianiSahu,1605.05883,GoatinRossi,1211.4619}. 
The long and technically demanding paper  \cite{FrancescoRosini} shows
this convergence, while
in \cite{ColomboRossi,Rossi}, the convergence of the discrete system is assumed rather than proved.
The approach here resembles \cite{1702.01718} where  FtL models are viewed as a numerical approximation of the LWR model, and the proof of convergence depends on classical results by Crandall–Majda and Wagner for a grid approximation.

Here we offer is a simple and straightforward proof of the continuum limit.

Solutions to scalar conservation laws are in general not
continuous, and \eqref{eq:scalcons} must be considered in the weak
sense; furthermore weak solutions to the Cauchy problem are not
unique, and in order for the Cauchy problem to have a unique solution,
one must impose the Kru\v{z}kov entropy condition \cite{HoldenRisebro}:
A function $\rho\in C([0,\infty);L^1(\R))$ is called an \emph{entropy
  solution} to the Cauchy problem for \eqref{eq:scalcons} if for all
constants $k\in \R$ and all non-negative test functions $\test\in
C^1_0([0,\infty)\times\R)$, one has
\begin{align}
  \int_0^\infty \int_\R \big(\abs{\rho-k}\test_t + \sign{\rho-k}&(f(\rho)-f(k))\test_z\big) \,dzdt  \notag\\
 &\qquad  +
  \int_\R \abs{\rho(0,z)-k} \test(0,z)\,dz \ge 0.   \label{eq:scalent}
\end{align}

More precisely, we show the following result. Assume that the velocity function satisfies
the reasonable assumptions \eqref{eq:vassume}, 
and the initial data $\rho(0,\dott)\in L^1(\R)\cap BV(\R)$.
  Let $\rho_\ell(t,z)$ be the density of vehicles as defined by the FtL model, see \eqref{eq:ellrhoV}. Then we show that $\lim_{\ell\to 0}\rho_\ell
  = \rho\in C([0,\infty);L^1(\R))$, where $\rho$ is the unique 
  solution to \eqref{eq:scalcons} satisfying the entropy condition \eqref{eq:scalent} such that $\rho(0,z)=\rho_0(z)$.

The rest of this note is organized as follows:  In
Section~\ref{sec:model} we define the discrete model and prove some
simple bounds on its solutions, and in Section~\ref{seq:proof} we give
the elementary proof of convergence.

\section{The model}\label{sec:model}
We use units such that $v_{\max}=1$.
Let $v(\rho)$ be a continuously differentiable function $v\colon[0,1]\to
[0,1]$, such that $v'\le 0$, $v(0)=1$ and $v(1)=0$. We use the
notation $V(y)=v(1/y)$, assume that 
\begin{subequations}\label{eq:vassume}
  \begin{align}
    \label{eq:vdecrease}
    V(y)&\ge 1 - \frac{1}{y^{\sigma-1}}, \quad \text{for some 
      constant $\sigma>1$, }\\
    y^2 V'(y) &\le M, \quad \text{for $y\ge 1$, and for some constant
      $M$.}
    \label{eq:vprimebound}
  \end{align}
\end{subequations}
Define the forward
difference 
\begin{equation*}
  \Dp h_i = \frac{1}{\ell}\big(h_{i+1}-h_i \big).
\end{equation*}
Let $\seq{y_i(t)}_{i=1}^{N-1}$ satisfy
\begin{equation}
  \label{eq:ydt}
  \dot{y}_i = \Dp V_i, \ \ i=1,\ldots,N-1,\ \   \ t>0,
\end{equation}
where $V_i=V(y_i)$ and $V_N = 1$. Later we will also need $y_N = \infty$. Regarding the initial values, we assume that there
is a function $\rho_0\colon\R \to [0,1]$ normalized such that $\int_\R
\rho_0 \,dz = 1$.  Define $\seq{z_{i+1/2}(0)}_{i=0}^{N-1}$ inductively as
\begin{equation}\label{eq:zinit}
   \int_{z_{i-1/2}(0)}^{z_{i+1/2}(0)} \rho_0(z)\,dz =
   \frac{1}{N+1} = \ell,\quad i=0,\ldots,N-1.
\end{equation}
Thus with the current scaling, the length $\ell$ of each vehicle is $\ell=1/(N+1)$.
We will also need $z_{-1/2}(0)=-\infty$.
Here we choose the infimum of possible values for $z_{i+1/2}(0)$ satisfying \eqref{eq:zinit}. 
Set 
\begin{equation}\label{eq:yinit}
  y_i(0)=\frac{1}{\ell}\big(z_{i+1/2}(0)-z_{i-1/2}(0)\big),\quad i=1,\ldots,N-1.
\end{equation}
Observe that it follows from \eqref{eq:zinit} that  $y_i(0)\ge 1$ for $i=1,\ldots,N-1$, since $\rho_0\in[0,1]$.
\begin{lemma}
  \label{lem:yellbnd}
  Assume that $V$ satisfies \eqref{eq:vdecrease} and that
  $\seq{y_i}_{i=1}^{N-1}$ solves the system \eqref{eq:ydt} with
  initial values \eqref{eq:yinit}. Then
  \begin{equation}\label{eq:yjbnd}
    1\le y_i(t)\le \Bigl(y_i(0)^\sigma + \frac{\sigma
      t}{\ell}\Bigr)^{1/\sigma},\quad
    i=1,\ldots,N-1.
  \end{equation}
  In particular,
  \begin{equation}\label{eq:yelllim}
    \lim_{\ell\to 0} \left(\ell^\kappa y_{i}(t)\right) = 0, \, t\in
      (0,\infty), \,\kappa>1/\sigma,\, i=1,\dots,N-1.
  \end{equation}
\end{lemma}
\begin{proof}
  If $y_i(t)=1$, then $V(y_i(t))=0$ and hence $\dot{y}_i(t)\ge
  0$. This gives the lower bound on $y_i$.

  Using \eqref{eq:vdecrease} and the bound $V_{i+1}\le 1$,  we get
  \begin{equation*}
    \dot{y}_i = \frac{1}{\ell}\left(V_{i+1}-V_i\right) \le
    \frac{1}{\ell y_i^{\sigma-1}}. 
  \end{equation*}
 By integrating this inequality, 
we see that the estimate \eqref{eq:yjbnd} holds, and the limit
  \eqref{eq:yelllim} then follows trivially.
\end{proof}

\begin{lemma}
  \label{lem:vrhobv}
  Define $\rho_i(t)=1/y_i(t)$.  Write $V_i(t)=V(y_i(t))$. 
  We have that 
  \begin{align}
     \sum_{i=1}^{N-1} \abs{V_{i+1}(t)-V_i(t)} &\le \sum_{i=1}^{N-1}
      \abs{V_{i+1}(0)-V_i(0)} \label{eq:BVV}\\
       \intertext{and} 
       \sum_{i=1}^{N-1}
      \abs{\rho_{i+1}(t)-\rho_i(t)} &\le  \sum_{i=1}^{N-1}
      \abs{\rho_{i+1}(0)-\rho_i(0)}.\label{eq:BVrho}
  \end{align}
\end{lemma}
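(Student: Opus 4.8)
The plan is to derive both estimates from a single structural fact: each of the sequences $\seq{V_i}$ and $\seq{\rho_i}$ solves a system of the form $\dot u_i = c_i(u_{i+1}-u_i)$ with non-negative coefficients $c_i$ and a \emph{frozen} right endpoint $u_N$. Once this is established, a purely algebraic telescoping argument shows that the discrete total variation $\sum_{i=1}^{N-1}\abs{u_{i+1}-u_i}$ is non-increasing in time, which gives \eqref{eq:BVV} and \eqref{eq:BVrho} simultaneously.

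First I would record the two differential equations. Differentiating $V_i=V(y_i)$ and using \eqref{eq:ydt} gives $\dot V_i = V'(y_i)\dot y_i = \ell^{-1}V'(y_i)(V_{i+1}-V_i)$ for $i=1,\dots,N-1$, with $V_N=1$ constant, so $\dot V_N=0$; note $V'\ge 0$ since $V(y)=v(1/y)$ with $v'\le0$. Since $\rho_i=1/y_i$ one finds $\dot\rho_i = -\dot y_i/y_i^2 = -\ell^{-1}\rho_i^2\big(v(\rho_{i+1})-v(\rho_i)\big)$, with $\rho_N=1/y_N=0$ constant. By the mean value theorem $v(\rho_{i+1})-v(\rho_i)=v'(\eta_i)(\rho_{i+1}-\rho_i)$ for some $\eta_i$ between $\rho_i$ and $\rho_{i+1}$, so $\dot\rho_i=c_i(\rho_{i+1}-\rho_i)$ with $c_i=-\ell^{-1}\rho_i^2 v'(\eta_i)\ge0$. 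Thus both systems have the advertised form, with the convention $c_N=0$.

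Next I would prove the abstract estimate. With $d_i=u_{i+1}-u_i$ and $d_N:=0$, differentiation yields $\dot d_i = c_{i+1}d_{i+1}-c_i d_i$ for $i=1,\dots,N-1$ (using $\dot u_N=0$). The $y_i$ are $C^1$ and global thanks to the bounds of Lemma~\ref{lem:yellbnd}, so each $d_i$ is $C^1$, $\abs{d_i}$ is Lipschitz, and $\frac{d}{dt}\abs{d_i}=\sign{d_i}\dot d_i$ for a.e.\ $t$. Summing,
\[
\frac{d}{dt}\sum_{i=1}^{N-1}\abs{d_i} = \sum_{i=1}^{N-1}\sign{d_i}\,c_{i+1}d_{i+1} - \sum_{i=1}^{N-1}c_i\abs{d_i}.
\]
Reindexing the first sum by $j=i+1$ (the $j=N$ term vanishes because $d_N=0$) and using $\sign{d_{j-1}}\,d_j\le\abs{d_j}$ together with $c_j\ge0$, the first sum is at most $\sum_{j=2}^{N-1}c_j\abs{d_j}$, so everything telescopes to
\[
\frac{d}{dt}\sum_{i=1}^{N-1}\abs{d_i}\le -c_1\abs{d_1}\le 0.
\]
Integrating in time gives $\sum\abs{d_i(t)}\le\sum\abs{d_i(0)}$; taking $u_i=V_i$ gives \eqref{eq:BVV} and $u_i=\rho_i$ gives \eqref{eq:BVrho}.

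The only delicate points are the treatment of the right boundary $i=N$, where $V_N$ and $\rho_N$ are frozen (this is precisely what makes the leftover term $-c_1\abs{d_1}$ have the favourable sign), and the differentiation of the absolute value, which I would justify by the a.e.\ identity above together with the absolute continuity of $t\mapsto\sum\abs{d_i(t)}$. Everything else is elementary telescoping, and the sign conditions $V'\ge0$ and $v'\le0$ are exactly what guarantee the non-negativity of the off-diagonal coefficients $c_i$ that the argument needs.
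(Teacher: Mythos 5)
Your proof is correct. At its core it is the same mechanism as the paper's: differentiate each $\abs{d_i}$, use the sign condition to dominate the off-diagonal contribution by $c_{i+1}\abs{d_{i+1}}$, and let the sum telescope so that only the favourable term $-c_1\abs{d_1}$ survives, the right-hand boundary being neutralized by the frozen leader (the conventions $V_N=1$, $y_N=\infty$, hence $\rho_N=0$). The packaging, however, differs in two genuine ways. First, you isolate one abstract statement --- discrete total variation is non-increasing for cooperative systems $\dot u_i = c_i(u_{i+1}-u_i)$, $c_i\ge 0$, with frozen right endpoint --- and apply it twice; the paper instead computes directly for $V_i$ (bounding $\sign{V_{i+1}-V_i}V'(y_{i+1})\Dp V_{i+1}\le V'(y_{i+1})\abs{\Dp V_{i+1}}$ and telescoping) and then transfers the estimate to $\rho_i$ via the observation $\sign{V_{i+1}-V_i}=-\sign{\rho_{i+1}-\rho_i}$ together with $\dot\rho_i=-\rho_i^2\Dp V_i$. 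Second, to put the $\rho$-system into cooperative form you invoke the mean value theorem for $v$, writing $\dot\rho_i=c_i(\rho_{i+1}-\rho_i)$ with $c_i=-\ell^{-1}\rho_i^2 v'(\eta_i)\ge 0$; this linearization is exactly the step the paper's sign-flip remark avoids. What your route buys is uniformity (one lemma, two corollaries) and explicitness about points the paper glosses over: the a.e.\ differentiation of $\abs{d_i}$ justified by Lipschitz continuity and absolute continuity of the sum, and the precise role of the conventions at $i=N$. What the paper's route buys is brevity, since it never needs to introduce the coefficients $c_i$ or the intermediate points $\eta_i$ for the $\rho$-equation.
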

\begin{proof}
  We find that
  \begin{align*}
    \frac{d}{dt}\abs{V_{i+1}-V_i}&=\sign{V_{i+1}-V_i} V'(y_{i+1})\Dp
    V_{i+1} -  V'(y_{i}) \abs{\Dp V_i}\\
    &\le V'(y_{i+1})\abs{\Dp V_{i+1}} - V'(y_{i}) \abs{\Dp V_i},
  \end{align*}
  since $V'\ge 0$.
  For $i=N$ we recall the conventions that $y_N=\infty$ and $V_N=1$, and
  thus $V'(y_N)=0$.  We have that $0\le V_i \le 1$. Then
  \begin{align*}
    \frac{d}{dt}\abs{V_{i+1}-V_i}&=\sign{V_{i+1}-V_i} V'(y_{i+1})\Dp
    V_{i+1} -  V'(y_{i}) \abs{\Dp V_i}\\
    &\le V'(y_{i+1})\abs{\Dp V_{i+1}} - V'(y_{i}) \abs{\Dp V_i},
  \end{align*}
  since $V'\ge 0$. This means that
  \begin{align}
    \label{eq:Vbv}
    \frac{d}{dt}\sum_{i=1}^{N-1} \abs{V_{i+1}-V_i} &\le
    \frac{1}{\ell}\left(V'(y_N)\abs{1-1}
      - V'(y_1) \abs{V_2-V_1}\right)\\
    &=-\frac{1}{\ell} V'(y_1) \abs{V_2-V_1} \le 0,
  \end{align}
  which shows \eqref{eq:BVV}.
  Since $\sign{V_{i+1}-V_i}=-\sign{\rho_{i+1}-\rho_i}$, we could also
  carry out these estimates for $\rho_i$, proving \eqref{eq:BVrho}.
\end{proof}
Note that 
\begin{equation*}
  \sum_i \abs{\rho_{i+1}(0)-\rho_i(0)}\le \abs{\rho_0}_{BV},
\end{equation*}
where $\abs{\dott}_{BV}$ denotes the bounded variation norm.
For $t>0$, define 
$z_{i+1/2}(t)$ by
\begin{equation}\label{eq:zdt}
  \dot{z}_{i-1/2} = V_{i},\quad  i=1,\ldots N, 
\end{equation}
with initial values given by \eqref{eq:zinit}. Combining \eqref{eq:ydt},  \eqref{eq:yinit}, and  \eqref{eq:zdt}
we conclude that
\begin{equation}\label{eq:yzinit}
  y_i(t)=\frac{1}{\ell}\big(z_{i+1/2}(t)-z_{i-1/2}(t)\big),\quad t\in[0,\infty), \, i=1,\ldots,N-1.
\end{equation}
In particular, 
\begin{equation}\label{eq:zrho}
\big(z_{i+1/2}(t)-z_{i-1/2}(t)\big) \rho_i(t)=\ell, \quad i=1,\ldots,N-1.
\end{equation}
Note that $z_{i-1/2}$
coincides with the position of the $i$th vehicle from the left, given
by \eqref{eq:Zeq}.  Thus $Z_i=z_{i+1/2}$. 

Furthermore, define the functions 
\begin{equation} \label{eq:ellrhoV}
\begin{aligned}
  \rho_\ell(t,z)&=\sum_{i=1}^{N-1}\rho_i(t) \cha{[z_{i-1/2}(t),z_{i+1/2}(t))}(z),\\
  V_\ell(t,z)&=\sum_{i=1}^{N-1}V_i(t) \cha{[z_{i-1/2}(t),z_{i+1/2}(t))}(z),
\end{aligned}
\end{equation}
where $\cha{I}$ denotes the characteristic function of an interval $I$. Observe that Lemma~\ref{lem:vrhobv} implies that
\begin{equation} \label{eq:BVt}
  \abs{\rho_\ell(t)}_{BV}\le  \abs{\rho_\ell(0)}_{BV}, \quad   \abs{V_\ell(t)}_{BV}\le  \abs{V_\ell(0)}_{BV}. 
\end{equation}

\section{The continuum limit}\label{seq:proof}
\begin{theorem}
  \label{thm:ourtheorem}
  Assume that the function $V$ satisfies \eqref{eq:vassume}, and
  that $\rho_0\in L^1(\R)\cap BV(\R)$.
  Let $\rho_\ell$ be as defined above. Then $\lim_{\ell\to 0}\rho_\ell
  = \rho\in C([0,\infty);L^1(\R))$, where $\rho$ is the unique entropy
  solution to \eqref{eq:scalcons} such that $\rho(0,z)=\rho_0(z)$.
\end{theorem}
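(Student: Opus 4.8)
The plan is to follow the classical route for proving convergence of a conservative approximation to the entropy solution of a scalar conservation law: extract a strongly convergent subsequence of $\{\rho_\ell\}$, identify every limit as a Kru\v{z}kov entropy solution of \eqref{eq:scalcons} with datum $\rho_0$, and invoke uniqueness to conclude that the whole family converges. The organizing observation is that in the Lagrangian mass variable the system \eqref{eq:ydt} reads $\dot y_i=\tfrac1\ell\big(V(y_{i+1})-V(y_i)\big)$, which is a monotone upwind semidiscretization of $y_t=V(y)_m$ with $y=1/\rho$; this is the structural source of the a priori bounds in Lemmas~\ref{lem:yellbnd}--\ref{lem:vrhobv}, and its monotonicity ultimately yields the entropy inequality. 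Since the statement concerns the Eulerian $\rho_\ell$, however, I would run the estimates in the variables $(t,z)$.

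For compactness I would first collect the uniform bounds. The lower bound $y_i\ge1$ from \eqref{eq:yjbnd} gives $0\le\rho_\ell\le1$; the bound \eqref{eq:BVt} gives $\abs{\rho_\ell(t)}_{BV}\le\abs{\rho_0}_{BV}$ uniformly in $t$ and $\ell$; the total mass is conserved and equals $(N-1)\ell\to1$; and $0\le\dot z_{i-1/2}=V_i\le1$ shows the mass travels with speed at most $v_{\max}=1$, giving tightness in $L^1(\R)$. For temporal equicontinuity I would differentiate $\int_\R\rho_\ell\,\varphi\,dz$ for $\varphi\in C^1_0(\R)$: using $\dot\rho_i=-\rho_i^2\Dp V_i$, \eqref{eq:zrho}, and the motion \eqref{eq:zdt} of the interfaces, a short computation gives $\big|\tfrac{d}{dt}\int_\R\rho_\ell\,\varphi\,dz\big|\le 2\norm{\varphi'}_{L^\infty}$, uniformly in $\ell$. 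Combined with the spatial variation bound, Kru\v{z}kov's interpolation lemma (see \cite{HoldenRisebro}) then produces a subsequence converging in $C([0,T];L^1(\R))$, and a.e., to some $\rho\in C([0,\infty);L^1(\R))$ with $0\le\rho\le1$.

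To identify $\rho$ as a weak solution I would use the primitive $F_\ell(t,z)=\int_{-\infty}^z\rho_\ell(t,z')\,dz'$, whose level sets are exactly the particle paths, $F_\ell(t,z_{i+1/2}(t))=i\ell$ being independent of $t$. Differentiating $F_\ell$ at a fixed $z$ in the $i$th cell gives $\partial_tF_\ell=-f(\rho_\ell)+R_\ell$, with $f(\rho_i)=\rho_iV_i$ as in \eqref{eq:scalcons} and $\norm{R_\ell(t)}_{L^1(\R)}\le\tfrac12\sum_i\abs{V_{i+1}-V_i}\,\Delta z_i$, where $\Delta z_i=z_{i+1/2}-z_{i-1/2}=\ell y_i\to0$ by \eqref{eq:yelllim}; with \eqref{eq:BVt} this forces $R_\ell\to0$ in $L^1$. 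Since $\rho_\ell=\partial_zF_\ell$, a double integration by parts yields
\[
  \int_0^\infty\!\!\int_\R\big(\rho_\ell\varphi_t+f(\rho_\ell)\varphi_z\big)\,dz\,dt+\int_\R\rho_\ell(0,z)\varphi(0,z)\,dz=\int_0^\infty\!\!\int_\R R_\ell\varphi_z\,dz\,dt\to0 .
\]
As $f$ is Lipschitz on $[0,1]$ by \eqref{eq:vprimebound}, the a.e.\ convergence $\rho_\ell\to\rho$ and $\rho_\ell(0,\dott)\to\rho_0$ (from the cell-average construction \eqref{eq:zinit}) pass this to the limit, so $\rho$ is a weak solution with $\rho(0,\dott)=\rho_0$.

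The main work, and the step I expect to be the real obstacle, is the entropy inequality \eqref{eq:scalent}. The monotonicity is transparent in the Lagrangian form: for any constant $c$, using $V'\ge0$ exactly as in the proof of Lemma~\ref{lem:vrhobv} one obtains the cell-wise Kru\v{z}kov inequality $\tfrac{d}{dt}\abs{y_i-c}\le\Dp Q_i$ with $Q_i=\sign{y_i-c}\,(V_i-V(c))$. The difficulty is to turn this into the \emph{Eulerian} statement \eqref{eq:scalent}: the entropies the scheme produces are convex functions of $y=1/\rho$, not the Kru\v{z}kov entropies $\abs{\rho-k}$. One must therefore either insert $k$ directly into the Eulerian computation of $\tfrac{d}{dt}\sum_i\abs{\rho_i-k}\int_{z_{i-1/2}}^{z_{i+1/2}}\varphi\,dz$ and check that the emerging numerical entropy flux is consistent with $\sign{\rho-k}(f(\rho)-f(k))$, or pass through the equivalence of Lagrangian and Eulerian entropy solutions. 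Either way the delicate points are the same---the bookkeeping of the moving interfaces under summation by parts, and the verification that the consistency error, once more controlled by $\max_i\Delta z_i$ and the variation bound \eqref{eq:BVt}, tends to $0$. Once \eqref{eq:scalent} holds for all $k$, $\rho$ is the unique entropy solution of \eqref{eq:scalcons} with datum $\rho_0$; the limit is then independent of the subsequence, the whole family converges, and the theorem follows.
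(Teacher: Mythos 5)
Your compactness step and your identification of the limit as a weak solution are sound and essentially parallel the paper's: the paper obtains temporal $L^1$-Lipschitz continuity by testing its summation-by-parts identity against $\cha{[t_1,t_2]}(t)\psi(z)$ and then invokes the same compactness theorem from \cite{HoldenRisebro}, and its weak-solution error term is controlled exactly as yours, by cell widths times the variation bound \eqref{eq:BVt}. Your primitive-function device, using that $F_\ell(t,z_{i+1/2}(t))=i\ell$ is constant along particle paths, is a legitimate and clean variant of the paper's direct computation \eqref{eq:rhotweak}--\eqref{eq:rhoxweak}.

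The genuine gap is the entropy inequality, which is the crux of the theorem and which you describe rather than prove. You correctly derive the Lagrangian cell-wise inequality $\tfrac{d}{dt}\abs{y_i-c}\le \Dp Q_i$ and correctly observe that the scheme naturally produces convex entropies in $y=1/\rho$ rather than the Kru\v{z}kov entropies $\abs{\rho-k}$; but you then only name two possible ways out (``insert $k$ directly into the Eulerian computation'' or ``pass through the Lagrangian--Eulerian equivalence'') without executing either, explicitly leaving the flux consistency and the moving-interface bookkeeping as open ``delicate points.'' That is precisely where the theorem lives. The missing idea, which the paper supplies, is the convexity-preserving correspondence $\mu(\rho)=\rho\,\eta(1/\rho)$: the Kru\v{z}kov entropy $\mu(\rho)=\abs{\rho-k}$ corresponds to the \emph{convex} Lagrangian entropy $\eta(y)=y\abs{1/y-k}$, so the cell-wise inequality $\tfrac{d}{dt}\eta(y_i)\le\Dp Q_i$ with $Q'=\eta' V'$ applies to it; transforming the flux as $q(\rho)=Q(1/\rho)$, a direct computation gives $\mu(\rho)V(1/\rho)-q(\rho)=\sign{\rho-k}\bigl(f(\rho)-f(k)\bigr)$, i.e.\ the scheme's entropy flux is consistent with \eqref{eq:scalent}; and the summation by parts on the moving mesh leaves an error $r_\ell$ bounded by cell widths times $\sum_i\bigl(\abs{V_{i+1}-V_i}+\abs{q_{i+1}-q_i}\bigr)$, which is uniformly controlled because \eqref{eq:vprimebound} makes $q$ Lipschitz, $\abs{q'(\rho)}\le M\abs{k}$, so that $q_\ell$ inherits the BV bound from $\rho_\ell$. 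None of these three ingredients --- the entropy correspondence, the flux-consistency computation, and the Lipschitz bound on $q$ (the only place \eqref{eq:vprimebound} is actually needed) --- appears in your proposal. Note also that your fallback, the equivalence of Lagrangian and Eulerian entropy solutions, is Wagner's theorem; relying on it would reproduce the approach of \cite{1702.01718}, which this paper was explicitly written to avoid, and in a self-contained proof that equivalence would itself have to be established.
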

\begin{proof}
Observe that $\rho_\ell$ is in $L^1(\R)$, since it is positive and 
\begin{equation*}
  \frac{d}{dt}\int_\R \rho_\ell(t,z) \,dz = \sum_{i=1}^{N-1} \frac{d}{dt}
  \int_{z_{i-1/2}}^{z_{i+1/2}} \rho_i\,dz =
  \sum_{i=1}^{N-1} \frac{d}{dt} \big((z_{i+1/2}-z_{i-1/2})\rho_i\big) = 0.
\end{equation*}
Hence $\norm{\rho_\ell(t)}_{L^1}\le 1$.
For any $\seq{h_i}$ define 
\begin{equation*}
  \Dp^z h_i = \frac{h_{i+1}-h_i}{z_{i+1/2}-z_{i-1/2}}=\rho_i \Dp h_i.
\end{equation*}
Let $\test=\test(t,z)$ be a smooth test function with compact support in
$\R\times (0,\infty)$. We calculate
\begin{align}
  \int_0^\infty \int_\R \rho_\ell \test_t\,dzdt &= \int_0^\infty
  \sum_i \rho_i \int_{z_{i-1/2}}^{z_{i+1/2}} \test_t
  \,dz\,dt \notag\\
  &=\int_0^\infty \sum_i \Big[\rho_i\,\frac{\partial}{\partial t}\Bigl( \int_{z_{i-1/2}}^{z_{i+1/2}} \test
  \,dz\Bigr) - \rho_i \ell \Dp\left(\dot{z}_{i-1/2}\test_{i-1/2}\right) \Big]\,dt
   \notag\\
  &=-\int_0^\infty\sum_i \Big[ \int_{z_{i-1/2}}^{z_{i+1/2}} \dot{\rho}_i
  \test \, dz - \ell\Dp (\rho_i) V_{i+1} \test_{i+1/2}\Big] \,dt \notag\\
  &= \int_0^\infty \sum_i\int_{z_{i-1/2}}^{z_{i+1/2}}\Big( \rho_i^2 \Dp(V_i) \test + \Dp^z
  (\rho_i) V_{i+1} \test_{i+1/2}\Big)\,dz \,dt \notag\\
  &= \int_0^\infty \sum_i\int_{z_{i-1/2}}^{z_{i+1/2}}\Big( \rho_i \Dp^z(V_i) \test + \Dp^z
  (\rho_i) V_{i+1} \test_{i+1/2}\Big)\,dz \,dt,\label{eq:rhotweak}
\end{align}
where we have used that $\dot{\rho}_i = - \rho_i^2\Dp V_i$, and introduced  the
notation $\test_{i+1/2}=\test(t,z_{i+1/2})$.
Similarly,
\begin{align}
  \int_0^\infty \int_\R \rho_\ell V_\ell \test_z\,dzdt &=
  \int_0^\infty \sum_i \rho_i V_i \int_{z_{i-1/2}}^{z_{i+1/2}} \test_z
  \,dz\,dt \notag \\
  &=-\int_0^\infty \sum_i \ell\Dp\left(\rho_iV_i\right)\test_{i+1/2}
  \,dt \notag \\
  &=-\int_0^\infty \sum_i \int_{z_{i-1/2}}^{z_{i+1/2}}\left( \rho_i
  \Dp^z\left(V_i\right) + \Dp^z\left(\rho_i\right)V_{i+1}\right)
  \test_{i+1/2}\,dz\,dt.   
  \label{eq:rhoxweak}
\end{align}
Using \eqref{eq:rhotweak} with $\test(t,z)=\cha{[t_1,t_2]}(t)\psi(z)$ for $0<t_1<t_2<\infty$ and a smooth test function $\psi$ with $\abs{\psi}\le1$, we formally
get with $\psi_{i+1/2}=\psi(z_{i+1/2})$ that
\begin{align*}
\Big| \int_\R \big(\rho_\ell(t_2,z)&- \rho_\ell(t_1,z)\big)\psi(z)\,dz\Big| \\
&= \Big|\int_{t_1}^{t_2} \sum_i\int_{z_{i-1/2}}^{z_{i+1/2}}\Big( \rho_i \Dp^z(V_i) \psi + \Dp^z
  (\rho_i) V_{i+1} \psi_{i+1/2}\Big)\,dz \,dt \Big| \\
&\le  \int_{t_1}^{t_2} \sum_i\int_{z_{i-1/2}}^{z_{i+1/2}}\Big( \abs{\rho_i} \abs{\Dp^z(V_i)} \abs{\psi} + \abs{\Dp^z
  (\rho_i)} \abs{V_{i+1}} \abs{\psi_{i+1/2}}\Big)\,dz \,dt  \\
&\le  \ell\int_{t_1}^{t_2} \sum_i\Big( \abs{\rho_i} \abs{\Dp(V_i)}  + \abs{\Dp
  (\rho_i)} \abs{V_{i+1}} \Big)\,dt  \\ 
&\le  \ell\int_{t_1}^{t_2} \sum_i\Big( \abs{\Dp(V_i)}  + \abs{\Dp
  (\rho_i)} \Big) \,dt  \\  
  &\le(t_2-t_1) \sum_i\big(\abs{V_{i+1}(0)-V_{i}(0)}+\abs{\rho_{i+1}(0)-\rho_{i}(0)} \big)\\
  &\le (t_2-t_1)\big(\abs{V_\ell(0)}_{BV}+\abs{\rho_\ell(0)}_{BV} \big), 
\end{align*}
using first that $\abs{\psi}, \abs{\rho_i}, \abs{V_i}\le 1$ and subsequently Lemma \ref{lem:vrhobv}.  By approximating the
characteristic function $\cha{[t_1,t_2]}$ with a smooth function, and taking the limit, we still obtain the above estimate.
This implies
\begin{align*}
  \norm{\rho_\ell(t_2)-\rho_\ell(t_1)}_{L^1} &=
  \sup_{\abs{\psi}\le 1} \int
  (\rho_\ell(t_2,z)-\rho_\ell(t_1,z))\psi(z)\,dz\\
  &\le (t_2 - t_1)\big(\abs{\rho_\ell(0)}_{BV}+\abs{V_\ell(0)}_{BV}\big).  
\end{align*}
Thus, recalling \eqref{eq:BVt}, we can apply \cite[Theorem A.11]{HoldenRisebro} to conclude that the set
$\seq{\rho_\ell}_{\ell>0}$ is compact in  $C([0,\infty);L^1(\R))$, and
there exists a sequence $\seq{\ell_j}_{j=1}^\infty$, $\ell_j\to 0$ as
$j\to \infty$, and a function $\rho$ such that 
\begin{equation*}
  \rho_{\ell_j} \to \rho \quad \text{in $C([0,\infty);L^1(\R))$, as $j\to \infty$.}
\end{equation*}
To simplify the notation, we henceforth write $\ell=\ell_j$.
Furthermore,  since $v(\rho_\ell)=V_\ell$, $V_\ell \to v(\rho)$. Adding
\eqref{eq:rhotweak} and \eqref{eq:rhoxweak}, we get
\begin{align*}
  \Bigl| \int_0^\infty\int_\R \big( \rho_\ell \test_t + \rho_\ell V_\ell
  \test_z\big)\,dzdt \Bigr| &=
  \Bigl| \int_0^\infty \sum_i \rho_i (V_{i+1}-V_i) \\
  &\qquad\qquad\qquad \times \int_{z_{i-1/2}}^{z_{i+1/2}}
  \left(\test(t,z)-\test(t,z_{i+1/2})\right)\, dz\,dt\Bigr|\\
  &\le \frac12   \int_0^\infty  \sup_{i}(\ell y_i(t))^2 
  \norm{\test_z(t,\dott)}_{L^\infty}\sum_i
  \abs{V_{i+1}-V_i}\,dt \\
  &\to 0,\ \ \text{as $\ell$ to 0,}
\end{align*}
and thus $\rho$ is a weak solution.
To show that $\rho$ is an entropy solution,
let $\eta$ be a  twice differentiable convex function. Since
$V'\ge 0$, we get
\begin{align*}
  \frac{d}{dt}\eta(y_i)&=\eta'(y_i)\Dp V_i=\frac1\ell\eta'(y_i)\int_{y_i}^{y_{i+1}}V'(y) \, dy\\
&\le \frac1\ell\int_{y_i}^{y_{i+1}}\eta'(y)V'(y) \, dy  = \frac1\ell\int_{y_i}^{y_{i+1}}Q'(y) \, dy 
    =\Dp Q_i,
\end{align*}
where $Q'=\eta' V'$ and $Q_i=Q(y_i)$. Introduce $q(\rho)=Q(1/\rho)$ with $q_i=q(\rho_i)$. Define $\mu=\mu(\rho)$ by
$\mu(\rho)=\rho\eta(1/\rho)$.  As usual we write $\mu_i=\mu(\rho_i)$ and $\eta_i=\eta(y_i)$. Then $\mu$ is a convex function of
$\rho$, and if $\mu$ is a convex function of $\rho$, then $\eta$ is a
convex function of $y$. We have that
\begin{equation*}
   \frac{d}{dt} \mu(\rho_i) = -\rho_i^2\left(\Dp V_i)\right) \eta_i +
   \rho_i \frac{d}{dt} \eta_i.  
\end{equation*}
Set $\mu_\ell(t,z) = \sum_i \mu_i(t) \cha{[z_{i-1/2}(t),z_{i+1/2}(t))}(z)$ with $\mu_i(t)=\mu(\rho_i(t))$, and define $q_\ell(t,z)$
similarly. As when establishing \eqref{eq:rhotweak}, we find  for a non-negative test function $\test$ with support in $\R\times(0,\infty)$ that
\begin{align*}
  \int_0^\infty \int_\R& \mu_\ell \test_t\,dzdt \\
  &= \int_0^\infty \sum_i
  \mu_i \int_{z_{i-1/2}}^{z_{i+1/2}} \test_t
  \,dz\,dt \\
  &=\int_0^\infty \sum_i\Big[ \mu_i \,\frac{\partial}{\partial
    t}\Bigl(\int_{z_{i-1/2}}^{z_{i+1/2}} \test \,dz\Bigr) - \mu_i \ell
  \Dp\left(\dot{z}_{i-1/2}\test_{i-1/2}\right)\Big] \,dt
  \\
  &=-\int_0^\infty \sum_i \Big[ \int_{z_{i-1/2}}^{z_{i+1/2}} \dot{\mu}_i
  \test \, dz - \ell\Dp (\mu_i) V_{i+1} \test_{i+1/2}\Big]  \,dt\\
  &\ge \int_0^\infty \sum_i\int_{z_{i-1/2}}^{z_{i+1/2}} \Big[\left(\eta_i
    \rho_i^2 \Dp(V_i) - \rho_i \Dp(q_i) \right)\test + \Dp^z
  (\mu_i) V_{i+1} \test_{i+1/2}\Big] \,dz \,dt\\
  &= \int_0^\infty \sum_i\int_{z_{i-1/2}}^{z_{i+1/2}}\Big[ \left(\mu_i
    \Dp^z(V_i) - \Dp^z(q_i)\right)\test + \Dp^z (\mu_i) V_{i+1}
  \test_{i+1/2}\Big]\,dz \,dt.
\end{align*}
Similarly
\begin{align*}
  \int_0^\infty \int_\R& \left(V_\ell\mu_\ell - q_\ell\right)\test_z\,dzdt \\
  &=
  \int_0^\infty \sum_i\left(V_i\mu_i -q_i\right)
  \int_{z_{i-1/2}}^{z_{i+1/2}}\test_z \,dz \,dt\\
  &=-\int_0^\infty \sum_i \ell\big[\Dp(\mu_iV_i) - \Dp
    (q_i)\big]\test_{i+1/2}\,dt\\
  &=-\int_0^\infty\sum_i \int_{z_{i-1/2}}^{z_{i+1/2}} \big[\mu_i \Dp^z(V_i) +
  V_{i+1}\Dp^z(\mu_i) - \Dp^z(q_i)\big]\test_{i+1/2}\,dz\,dt.
\end{align*}
Therefore, 
\begin{equation*}
  \int_0^\infty\int_\R \big(\mu_\ell \test_t + \left( \mu_\ell
    V_\ell-q_\ell\right) \test_z\big) \,dzdt \ge r_\ell,
\end{equation*}
where 
\begin{align*}
  \abs{r_\ell} &=\Bigl| \int_0^T \sum_i \int_{z_{i-1/2}}^{z_{i+1/2}}
  \left( \mu_i \Dp^z(V_i) - \Dp^z(q_i)\right) \left(\test -
    \test_{i+1/2}\right) \,dz\,dt\Bigr|\\
  &\le \int_0^\infty \sum_i\left( \abs{\mu_i}\,\abs{V_{i+1}-V_i} +
    \abs{q_{i+1}-q_i}\right) \int_{z_{i-1/2}}^{z_{i+1/2}}
  \abs{\test(t,z)-\test(t,z_{i+1/2})}\,dz\,dt\\
  &\le C   \int_0^\infty  \sup_{i}(\ell y_i(t))^2 
  \norm{\test_z(t,\cdot)}_{L^\infty}\sum_i\big(
  \abs{V_{i+1}-V_i}+\abs{q_{i+1}-q_i}\big)\,dt.
\end{align*}
If $q_\ell(t,\dott)$ is of bounded variation, then $r_\ell \to
0$. We now assume that $\mu$ is (a smooth approximation to) the
Kru\v{z}kov entropy $\mu(\rho)=\abs{\rho-k}$. A short computation yields that
\begin{equation*}
\mu(\rho) V\big(\frac1\rho\big)-q(\rho)=  \sign{\rho-k}\Big(\rho V(\frac1\rho)-kV(\frac1k)\Big), 
\end{equation*}
which is consistent with \eqref{eq:scalent}.
Then
$\eta(y)=y\abs{1/y-k}$, and $\abs{\eta'(y)}\le \abs{k}$.
If $V$ satisfies \eqref{eq:vprimebound}, the mapping $\rho\mapsto
q(\rho)$ is Lipschitz, since
\begin{equation*}
  \abs{q'(\rho)}= \abs{\frac{d}{d\rho}Q\left(\frac1\rho\right)}
  =\Bigl|\eta'\left(\frac{1}{\rho}\right)V'\left(\frac{1}{\rho}\right)\frac{1}{\rho^2}
  \Bigr|
  \le M \abs{k}.
\end{equation*} 
Hence  $q_\ell(t,\dott)$ is of bounded variation,
since $\rho_\ell$ is in $BV$.

A similar argument with a test function whose support include the initial data on $t=0$, will show
\eqref{eq:scalent}. We conclude that $\rho$ is an entropy solution. Since the entropy
solution is unique, we also conclude that the whole sequence, rather
than just a subsequence, converges.
\end{proof}


\begin{thebibliography}{99}

\bibitem{Argall_etal}
\newblock B. Argall, E. Cheleshkin, J. M. Greenberg, C. Hinde, and P.-J. Lin.
\newblock A rigorous treatment of a follow-the-leader traffic model with traffic lights present.
\newblock {\em SIAM J. Appl. Math.} 63(19): 149--168, 2002.


\bibitem{AwKlarMaterneRascle}
\newblock A. Aw, A. Klar, T. Materne, and M. Rascle.
\newblock  Derivation of continuum traffic flow models from microscopic follow-the-leader models.
\newblock  {\em SIAM J. Appl. Math.}, 63(1): 259--278, 2002.

\bibitem{ColomboRossi}
\newblock  R. M. Colombo and E. Rossi.
\newblock On the micro-macro limit in traffic flow.
\newblock {\em Rend. Sem. Math. Univ. Padova} 131:217--235, 2014.


\bibitem{CristianiSahu}
\newblock E. Cristiani and S. Sahu.
\newblock On the micro-to-macro limit for first-order traffic flow models on networks.
\newblock {\em Networks and Heterogeneous Media} 11(3):395--413, 2016. doi:10.3934/nhm.2016002.


\bibitem{1605.05883}
\newblock M. Di Francesco, S. Fagioli, and M. D. Rosini. 
\newblock Deterministic particle approximation of scalar conservation laws.
\newblock Preprint, \arxiv{1605.05883v1}, 2016.

\bibitem{FrancescoRosini}
\newblock M. Di Francesco and M. D. Rosini.
\newblock Rigorous derivation of nonlinear scalar conservation laws from follow-the-leader type models via many particle limit. 
\newblock  {\em Arch. Ration. Mech. Anal.}, 217(3):831--871, 2015.

\bibitem{GoatinRossi}
\newblock P. Goatin and F. Rossi.
\newblock A traffic flow model with non-smooth metric interaction: well-posedness and micro-macro limit.
\newblock   {\em Commun. Math. Sci.} 15(1):261--287, 2017. 

\bibitem{1211.4619}
\newblock Ke Han, Tao Yaob, and T. L. Friesz.
\newblock Lagrangian-based hydrodynamic model: Freeway traffic estimation.
\newblock Preprint \arxiv{1211.4619v1}, 2012.

\bibitem{HoldenRisebro} 
\newblock H. Holden and N. H. Risebro.  
\newblock {\em  Front Tracking for Hyperbolic Conservation Laws.}  
\newblock  Springer-Verlag, New York, 2015, Second edition.

\bibitem{1702.01718} 
\newblock H. Holden and N. H. Risebro.  
\newblock Follow-the-leader models can be viewed as a numerical approximation to the Lighthill--Whitham--Richards model for traffic flow.  
\newblock Preprint, \arxiv{1702.01718}, 2017.


\bibitem{LW_II}
\newblock M. J. Lighthill and G. B. Whitham.
\newblock Kinematic waves. II. A theory of traffic flow on long crowded roads.
\newblock {\em Proc. Roy. Soc. (London), Series A},  229(1178):317--345, 1955.

\bibitem{richards}
\newblock P. I. Richards.
\newblock Shockwaves on the highway.
\newblock {\em Operations Research}, 4(1): 42--51, 1956.

\bibitem{Rossi}
\newblock E. Rossi.
\newblock A justification of a LWR model based on a follow the leader description.
\newblock {\em Discrete Cont. Dyn. Syst. Series S} 7(3): 579--591, 2014.


 \end{thebibliography}
\end{document}